\newtheorem{theorem}{Theorem}
\newtheorem{lemma}[theorem]{Lemma}
\newtheorem{prop}[theorem]{Proposition}
\theoremstyle{definition}
\theoremstyle{remark}
\newtheorem{remark}[theorem]{Remark}
\numberwithin{equation}{section}
\begin{document}

\title{Examples of toric manifolds which are not quasitoric manifolds}

\author{Yusuke Suyama}
\address{Department of Mathematics, Graduate School of Science, Osaka City University, 3-3-138 Sugimoto, Sumiyoshi-ku, Osaka 558-8585 JAPAN}
\email{m13saU0r13@ex.media.osaka-cu.ac.jp}

\date{\today}
\subjclass[2010]{Primary 52B05, Secondary 14M25, 57S15}
\keywords{fan, toric manifold, quasitoric manifold, Barnette sphere}

\begin{abstract}
We construct toric manifolds of complex dimension $\geq 4$,
whose orbit spaces by the action of the compact torus are not homeomorphic to simple polytopes
(as manifolds with corners).
These provide the first known examples of toric manifolds which are not quasitoric manifolds.
\end{abstract}

\maketitle

\section{Introduction}

A {\it toric variety} $X$ of complex dimension $n$
is a normal algebraic variety over $\mathbb{C}$
containing the algebraic torus $(\mathbb{C}^*)^n$ as an open dense subset,
such that the natural action of $(\mathbb{C}^*)^n$ on itself extends to $X$,
where $\mathbb{C}^*=\mathbb{C}\setminus\{0\}$.
A {\it toric manifold} is a smooth compact toric variety.
The orbit space $X/(S^1)^n$ of a toric manifold $X$ by the restricted action of the compact torus
$(S^1)^n \subset (\mathbb{C}^*)^n$ is a manifold with corners
such that all faces are contractible and any non-empty intersection of faces is connected.
If $X$ is projective, then a moment map identifies the orbit space $X/(S^1)^n$
with a simple polytope, so the orbit space is homeomorphic to a simple polytope as a manifold with corners.
Any toric manifold of complex dimension $\leq 2$ is projective.
Although there are many non-projective toric manifolds in complex dimension $3$,
their orbit spaces are all homeomorphic to simple polytopes as manifolds with corners;
this follows from Steinitz's theorem on planar graphs.
In higher dimensions, it has been unknown whether this is still the case.

The purpose of this paper is to prove the following theorem:

\begin{theorem}\label{main theorem}
For any integer $n \geq 4$, there are infinitely many toric manifolds $X$ of complex dimension $n$
whose orbit spaces $X/(S^1)^n$ are not homeomorphic to any simple polytope as manifolds with corners.
\end{theorem}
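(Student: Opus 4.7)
The plan is to engineer non-polytopality directly into the fan by arranging that its underlying simplicial sphere is a known non-polytopal one. If $\Sigma$ is a nonsingular complete fan in $\mathbb{R}^n$, the orbit space $X_\Sigma/(S^1)^n$ is a manifold with corners whose face poset is dual to the simplicial $(n-1)$-sphere $K_\Sigma$ obtained by intersecting $\Sigma$ with a small sphere about the origin. If this orbit space were homeomorphic, as a manifold with corners, to a simple polytope $P$, then $K_\Sigma$ would be combinatorially the boundary complex of the polar dual $P^\ast$, hence polytopal. So it suffices to realize a non-polytopal simplicial sphere as $K_\Sigma$ for some nonsingular complete fan.

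For $n = 4$ I would use Barnette's simplicial $3$-sphere on $8$ vertices, the classical example not realizable as the boundary of any convex $4$-polytope. The task then becomes purely arithmetic: assign primitive vectors $v_1, \dots, v_8 \in \mathbb{Z}^4$ to the vertices so that the generators of every maximal simplex of the Barnette sphere form a $\mathbb{Z}$-basis of $\mathbb{Z}^4$. This is the main obstacle, since the combinatorics is rigid and the nonsingularity condition must be satisfied simultaneously at every top-dimensional cone. A natural approach is to normalize four of the vectors to the standard basis and then propagate through shared $3$-faces, solving the determinantal conditions one top cone at a time and keeping integer parameters free wherever possible. I expect these parameters to survive as a genuine family, producing infinitely many pairwise non-isomorphic nonsingular complete fans of Barnette combinatorial type, whose associated toric manifolds can be distinguished, for instance, by Chern numbers.

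For $n \geq 5$, I would take the product of such a $4$-dimensional fan with the standard fan of $\mathbb{C}P^{n-4}$. The orbit space of the resulting toric manifold is the product of the $4$-dimensional orbit space with the simplex $\Delta^{n-4}$, and its dual simplicial sphere is the join of the Barnette sphere with $\partial\Delta^{n-4}$. If this product were homeomorphic to a simple polytope, the vertex figure of the resulting simplicial polytope at any vertex arising from $\partial\Delta^{n-4}$ would realize the Barnette sphere as the boundary of a convex polytope, a contradiction. This yields infinitely many toric-but-not-quasitoric examples in every dimension $n \geq 4$.
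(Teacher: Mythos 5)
Your reduction to polytopality of the underlying simplicial sphere $K_\Sigma$ is the right first step, and your treatment of $n\geq 5$ by taking joins with $\partial\Delta^{n-4}$ (equivalently, products with $\mathbb{C}P^{n-4}$) is essentially the paper's suspension argument, modulo the small point that the link of a vertex coming from $\partial\Delta^{n-4}$ is the join of the Barnette sphere with $\partial\Delta^{n-5}$, so you must iterate the vertex-figure argument rather than apply it once. The fatal problem is the core of your $n=4$ construction: the ``purely arithmetic task'' of assigning primitive vectors to the eight vertices of the Barnette sphere so that all nineteen maximal cones are unimodular has \emph{no solution}. By a theorem of Ishida, Fukukawa and Masuda (Theorem 9.1 of \emph{Topological toric manifolds}), the Barnette sphere cannot be the underlying simplicial complex of any non-singular fan. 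So the determinantal system you propose to solve is inconsistent, and no family of parameters survives; the obstruction is not merely that the propagation is delicate.

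The way around this, which is what the paper does, is to first realize the Barnette sphere as the underlying complex of a \emph{singular} simplicial complete fan (this already takes a concrete choice of vectors and a completeness check), and then resolve the singular cones by stellar subdivisions until every maximal cone is unimodular. But this creates a second obligation that your proposal does not address: the subdivided sphere is no longer the Barnette sphere, and subdivision does not in general preserve non-polytopality, so you must re-prove that the \emph{refined} simplicial sphere is non-polytopal. The paper does this by a Schlegel-diagram argument showing that the link of a particular edge of the refined complex would have to project to the plane without self-intersection if the sphere were polytopal, while the combinatorics forces two linked triangles in that link, hence a self-intersection. Without both of these steps --- the singular realization plus resolution, and a fresh non-polytopality proof for the resolved complex --- the argument does not go through. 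Finally, for ``infinitely many'' examples in a fixed dimension, repeated stellar subdivisions away from the edge used in the non-polytopality proof give a cleaner source of examples than hoping for free integer parameters in the vertex assignment.
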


A {\it quasitoric manifold} $X$ of (real) dimension $2n$ over a simple polytope $P$
is a closed smooth manifold with a smooth action of $(S^1)^n$ such that
the action is locally standard and the orbit space $X/(S^1)^n$ is the simple polytope $P$ \cite{DJ}.
The restricted action of $(S^1)^n$ on a toric manifold of complex dimension $n$
is always locally standard.
Therefore, if a toric manifold is projective or $n \leq 3$, then it is a quasitoric maniofld \cite{BP}.
Our theorem implies that if $n \geq 4$, then there are infinitely many toric manifolds
of complex dimension $n$ which are not quasitoric manifolds.
In a 2003 preprint, Y. Civan \cite{C} claimed the existence of a toric manifold as in Theorem \ref{main theorem},
but his proof is unclear.
Our construction is based on ideas in \cite{C} but more explicit.

A {\it simplicial $n$-sphere} is a simplicial complex which is homeomorphic to $S^n$.
A simplicial sphere is {\it polytopal} if it is combinatorially equivalent to the boundary complex of a simplicial polytope.
The orbit space $X/(S^1)^n$ of a toric manifold $X$ is homeomorphic to a simple convex polytope
if and only if the underlying simplicial complex of the fan of $X$ is polytopal \cite{BP}.
The Barnette sphere is a simplicial $3$-sphere which is not polytopal \cite{B}.
However, the Barnette sphere cannot be the underlying simplicial complex of a non-singular fan \cite[Theorem 9.1]{IFM}.
So we first find a simplicial singular fan whose underlying simplicial complex is the Barnette sphere
and change it into a non-singular fan by subdivision while keeping the non-polytopality of the underlying simplicial complex.
Our proof of non-polytopality is similar to \cite[5.3 Theorem, Chapter III]{E}.
Thus we obtain a desired toric manifold $X$ of complex dimension $4$.
In fact, we can produce infinitely many such toric manifolds by performing subdivision and suspension on the fan of $X$.

The structure of the paper is as follows:
In Section 2, we give a fan whose underlying simplicial complex is the Barnette sphere.
In Section 3, we prove Theorem \ref{main theorem}.

\section{The Barnette sphere}

The Barnette sphere is a simplicial $3$-sphere
consisting of $8$ vertices $e_1, e_2, e_3, e_4,$
$d_1, d_2, d_3, d_4$ described by the diagram as in Figure \ref{Barnette2},
see \cite{B} for details.

\begin{figure}[htbp]
\begin{center}
\includegraphics{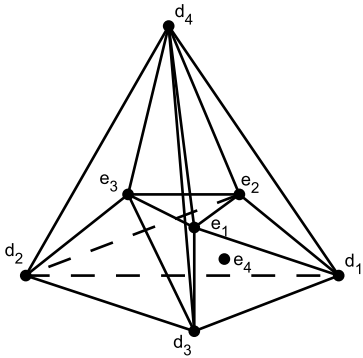}
\end{center}
\caption{diagram of the Barnette sphere (some edges are omitted).}
\label{Barnette2}
\end{figure}

We assign to the vertices of the Barnette sphere certain points in $\mathbb{R}^4$, as follows:
$e_1, e_2, e_3, e_4$ are the standard basis of $\mathbb{Z}^4 \subset \mathbb{R}^4$, and
\begin{equation*}
d_1=\left(\begin{array}{c}-1\\ 0\\-2\\ 1\end{array}\right),
d_2=\left(\begin{array}{c}-2\\-1\\ 0\\ 1\end{array}\right),
d_3=\left(\begin{array}{c} 0\\-2\\-1\\ 1\end{array}\right),
d_4=\left(\begin{array}{c} 1\\ 0\\ 1\\-1\end{array}\right).
\end{equation*}
Let $\Delta$ be the set consisting of 19 $4$-dimensional cones as in Table \ref{Barnette} and their faces.
The determinant of the matrix formed by four edge vectors of each $4$-dimensional cone in $\Delta$ is as in Table \ref{Barnette}.

\begin{table}[htbp]
\begin{center}
\begin{tabular}{|c|c|c||c|c|c||c|c|c|}
\hline
cone & edges & det & cone & edges & det & cone & edges & det \\
\hline
$\sigma_1$ & $e_1e_2e_3e_4$ &  $1$ &    $\sigma_8$ & $d_1e_2d_3e_4$ & $1$ & $\sigma_{15}$ & $e_1d_1d_3d_4$ &  $2$ \\
$\sigma_2$ & $d_1e_2e_3e_4$ & $-1$ &    $\sigma_9$ & $e_1d_2e_3d_3$ & $1$ & $\sigma_{16}$ & $d_1e_2d_2d_4$ &  $1$ \\
$\sigma_3$ & $e_1d_2e_3e_4$ & $-1$ & $\sigma_{10}$ & $e_1e_2d_3d_1$ & $1$ & $\sigma_{17}$ & $d_3d_2e_3d_4$ &  $3$ \\
$\sigma_4$ & $e_1e_2d_3e_4$ & $-1$ & $\sigma_{11}$ & $d_1e_2e_3d_2$ & $1$ & $\sigma_{18}$ & $d_1d_2d_3e_4$ & $-9$ \\
$\sigma_5$ & $e_1e_2e_3d_4$ & $-1$ & $\sigma_{12}$ & $e_1e_2d_1d_4$ & $1$ & $\sigma_{19}$ & $d_1d_2d_3d_4$ &  $3$ \\
$\sigma_6$ & $d_1d_2e_3e_4$ &  $1$ & $\sigma_{13}$ & $e_1d_3e_3d_4$ & $2$ & & & \\
$\sigma_7$ & $e_1d_2d_3e_4$ &  $1$ & $\sigma_{14}$ & $d_2e_2e_3d_4$ & $1$ & & & \\
\hline
\end{tabular}
\caption{$4$-dimensional cones in $\Delta$.}
\label{Barnette}
\end{center}
\end{table}

\begin{lemma}
$\Delta$ is a simplicial complete fan.
\end{lemma}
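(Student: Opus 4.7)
The plan is to reduce the lemma to three linear-algebra/combinatorial checks: (1) each maximal cone $\sigma_i$ is simplicial, i.e.\ its four generators are linearly independent; (2) the abstract simplicial complex $K$ underlying $\Delta$ is a simplicial $3$-sphere; and (3) at every codimension-one face (wall) the two adjacent maximal cones lie on opposite sides of the hyperplane it spans. Together these imply that $\Delta$ is a complete simplicial fan, by a standard argument recalled at the end.

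Step (1) is immediate from Table \ref{Barnette}, since all 19 listed determinants are nonzero. Step (2) is a purely combinatorial match between the 19 tetrahedra in Table \ref{Barnette} and the facets of the Barnette sphere as encoded in Figure \ref{Barnette2}; once this matching is verified, the fact that the Barnette sphere is by definition a simplicial $3$-sphere gives $|K|\cong S^{3}$.

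Step (3) is the technical heart. For a wall $\tau=\mathrm{cone}(w_1,w_2,w_3)$ shared by $\sigma=\mathrm{cone}(v,w_1,w_2,w_3)$ and $\sigma'=\mathrm{cone}(v',w_1,w_2,w_3)$, writing $v'=cv+\sum_i a_i w_i$ gives
\[
c=\frac{\det(v',w_1,w_2,w_3)}{\det(v,w_1,w_2,w_3)},
\]
and the wall condition is $c<0$. So for each adjacent pair $(\sigma_i,\sigma_j)$ I would read the two signed determinants from Table \ref{Barnette}, multiply by the sign of the permutation that puts the three shared generators into the same position in both matrices, and check that the two resulting numbers have opposite signs.

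With (1)--(3) in hand, the radial map $|K|\to S^{3}$ induced by the vector assignment is a local homeomorphism---wall conditions handle the codimension-one strata, and simpliciality handles the top-dimensional interiors---and hence, being a map from a compact space to a simply connected one, is a global homeomorphism. This says exactly that the $\sigma_i$ cover $\mathbb{R}^{4}$ and meet only in common faces, so $\Delta$ is a complete simplicial fan. The main obstacle is the bookkeeping in step (3): a simplicial $3$-sphere with $19$ tetrahedra has $19\cdot 4/2=38$ internal walls, and while each individual check is a single sign comparison of $4\times 4$ determinants up to a permutation sign, carrying this out systematically without error is by far the most laborious part of the argument.
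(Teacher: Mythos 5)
Your steps (1)--(3) are fine and in fact reproduce the paper's local check: for full-dimensional simplicial cones sharing a facet $\tau$, your sign condition $c<0$ is equivalent to the paper's condition that the two cones meet only in $\tau$, and the determinant-plus-permutation-sign bookkeeping is a perfectly good systematic way to verify it. The gap is in the global step. You assert that the radial map $|K|\to S^{3}$ is a local homeomorphism because ``wall conditions handle the codimension-one strata and simpliciality handles the top-dimensional interiors,'' but this says nothing about the strata of codimension $2$ and $3$ (the edges and vertices of $K$), and local injectivity there does \emph{not} follow from (1)--(3). Without it the map is not known to be a covering map, so the appeal to simple connectivity of $S^{3}$ is unavailable. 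This is not a pedantic point: the implication (1)$+$(2)$+$(3) $\Rightarrow$ complete fan is false. Take the five rays $u_k=(\cos(4\pi k/5),\sin(4\pi k/5))$ in $\mathbb{R}^{2}$ with the cones $\mathrm{cone}(u_k,u_{k+1})$: each adjacent pair meets only in the common ray and lies on opposite sides of it, the underlying complex is a $5$-cycle, yet the configuration winds twice around the origin and is not a fan. Suspending twice by $\pm e_3$ and $\pm e_4$ gives simplicial cones in $\mathbb{R}^{4}$ satisfying all of (1)--(3) with $|K|\cong S^{3}$, whose radial map is a degree-two branched cover of $S^{3}$, branched along the codimension-$2$ strata. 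Some further input is needed to pin the covering degree to $1$.

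That further input is exactly what the paper supplies and your proposal omits. The wall conditions show that the number of maximal cones containing a point is locally constant off the image of the codimension-$2$ skeleton, hence constant, since that complement of a codimension-$2$ set in $S^{3}$ is connected (this is the paper's ``covered uniformly''). The paper then verifies that the interior of the positive orthant $\sigma_1$ meets no other $\sigma_i$, forcing the constant to be $1$; hence the cones cover $\mathbb{R}^{4}$ with no overlaps and $\Delta$ is a complete fan. If you prefer to keep the covering-space phrasing, you would instead have to verify local injectivity of the radial map at every edge and vertex of $K$ (e.g.\ that the link of each ray maps to the sphere of directions around it with degree one), which is comparable labor; the paper's single normalization at $\sigma_1$ is the cheaper route, and you should add an equivalent step to close the argument.
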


\begin{proof}
One can easily check that for each $3$-dimensional cone $\tau$ in $\Delta$,
the two $4$-dimensional cones containing it as a common face have no intersection except $\tau$.
This implies that $\mathbb{R}^4$ is covered by the $4$-dimensional cones uniformly.
Hence if some cones overlap, then every cone is covered by some other cones.
However, it can be easily checked that each of $\sigma_2, \ldots, \sigma_{19}$
has no points whose coordinates are all positive (that is, interior points of $\sigma_1$).
So there are no overlaps, which means that $\Delta$ is a simplicial complete fan.
\end{proof}

\begin{remark}
A computer calculation shows that the $81^4=43,046,721$ lattice points in
\begin{equation*}
\left\{\left(\begin{array}{c}x_1 \\ x_2 \\ x_3 \\ x_4 \end{array}\right) \in \mathbb{Z}^4
\Biggm| x_i \in \mathbb{Z}, -40 \leq x_i \leq 40\right\}
\end{equation*}
are classified into five types as in Table \ref{computer}.
The $260$ relative interior points of the common $1$-face of more than two cones are
\begin{equation*}
\{me_1, me_2, me_3, me_4, nd_1, nd_2, nd_3, md_4 \in \mathbb{Z}^4
\mid m, n \in \mathbb{Z}, 1 \leq m \leq 40, 1 \leq n \leq 20\}.
\end{equation*}
The relative interior point of the common $0$-face of more than two cones is the origin.
The sum of the numbers in Table \ref{computer} agrees with $81^4$,
which confirms the completeness of the fan $\Delta$.

\begin{table}[htbp]
\begin{center}
\begin{tabular}{|c|r|c|}
\hline
classification & \# \\
\hline
interior points of a cone & 41,315,292 \\
rel. int. points of the common facet of two cones & 1,696,978 \\
rel. int. points of the common $2$-face of more than two cones & 34,190 \\
rel. int. points of the common $1$-face of more than two cones & 260 \\
rel. int. point of the common $0$-face of more than two cones & 1 \\
\hline
\end{tabular}
\caption{classification of lattice points.}
\label{computer}
\end{center}
\end{table}
\end{remark}

\section{Proof of Theorem \ref{main theorem}}

According to Table \ref{Barnette},
the singular $4$-dimensional cones of $\Delta$ are $\sigma_{13}, \sigma_{15}, \sigma_{17}, \sigma_{18},$ and $\sigma_{19}$.
We shall subdivide them so that the resulting $4$-dimensional cones are all non-singular.
We denote a cone by arranged edge vectors in $\mathbb{R}^4$ (e.g. $\sigma_{13}=e_1d_3e_3d_4$).

{\it Subdivision of $\sigma_{13}$ and $\sigma_{15}$.}
We introduce a point
\begin{equation*}
c_1=\frac{1}{2}e_1+\frac{1}{2}d_3+\frac{1}{2}d_4=\left(\begin{array}{c}1\\-1\\0\\0\end{array}\right).
\end{equation*}
Note that $c_1$ is on the $3$-dimensional cone $e_1d_3d_4$.
We subdivide the cones $\sigma_{13}$ and $\sigma_{15}$ as follows (see Figure \ref{12}):
\begin{eqnarray*}
\sigma_{13}=e_1d_3e_3d_4 & {\rm to} & c_1d_3e_3d_4, e_1c_1e_3d_4, e_1d_3e_3c_1; \\
\sigma_{15}=e_1d_1d_3d_4 & {\rm to} & c_1d_1d_3d_4, e_1d_1c_1d_4, e_1d_1d_3c_1.
\end{eqnarray*}
All the determinants of the resulting cones are $1$.

\begin{figure}[htbp]
\begin{center}
\includegraphics{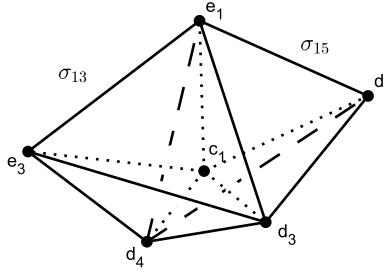}
\end{center}
\caption{subdivision of $\sigma_{13}$ and $\sigma_{15}$.}
\label{12}
\end{figure}

{\it Subdivision of $\sigma_{17}.$}
We introduce a point
\begin{equation*}
c_2=\frac{1}{3}d_3+\frac{1}{3}d_2+\frac{2}{3}e_3+\frac{2}{3}d_4=\left(\begin{array}{c}0\\-1\\1\\0\end{array}\right)
\end{equation*}
and subdivide $\sigma_{17}=d_3d_2e_3d_4$ to
\begin{equation*}
c_2d_2e_3d_4, d_3c_2e_3d_4, d_3d_2c_2d_4, d_3d_2e_3c_2.
\end{equation*}
The determinants of the cones $d_3d_2c_2d_4, d_3d_2e_3c_2$ are $2$.
So we further introduce a point
\begin{equation*}
c_3=\frac{1}{2}d_3+\frac{1}{2}d_2+\frac{1}{2}c_2=\left(\begin{array}{c}-1\\-2\\0\\1\end{array}\right).
\end{equation*}
Note that $c_3$ is on the $3$-dimensional cone $d_3d_2c_2$.
We subdivide the cones $d_3d_2c_2d_4$,
$d_3d_2e_3c_2$ as follows (see Figure \ref{13}):
\begin{eqnarray*}
d_3d_2c_2d_4 & {\rm to} & c_3d_2c_2d_4, d_3c_3c_2d_4, d_3d_2c_3d_4; \\
d_3d_2e_3c_2 & {\rm to} & c_3d_2e_3c_2, d_3c_3e_3c_2, d_3d_2e_3c_3.
\end{eqnarray*}
All the determinants of the resulting cones are $1$.

\begin{figure}[htbp]
\begin{center}
\includegraphics{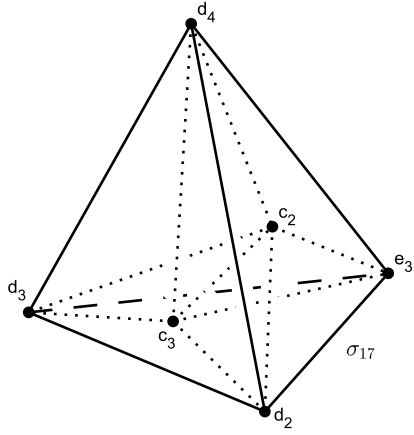}
\end{center}
\caption{subdivision of $\sigma_{17}$.}
\label{13}
\end{figure}

{\it Subdivision of $\sigma_{18}$.}
We introduce a point
\begin{equation*}
c_4=\frac{1}{3}d_1+\frac{1}{3}d_2+\frac{1}{3}d_3=\left(\begin{array}{c}-1\\-1\\-1\\1\end{array}\right).
\end{equation*}
Note that $c_4$ is on the $3$-dimensional cone $d_1d_2d_3$.
We subdivide $\sigma_{18}=d_1d_2e_3d_4$ to $c_4d_2d_3e_4, d_1c_4d_3e_4, d_1d_2c_4e_4$.
All the determinants of the resulting cones are $-3$.
So we further introduce points
\begin{equation*}
c_5=\frac{1}{3}c_4+\frac{1}{3}d_2+\frac{2}{3}d_3+\frac{2}{3}e_4=\left(\begin{array}{c}-1\\-2\\-1\\2\end{array}\right),
\end{equation*}
\begin{equation*}
c_7=\frac{2}{3}d_1+\frac{1}{3}c_4+\frac{1}{3}d_3+\frac{2}{3}e_4=\left(\begin{array}{c}-1\\-1\\-2\\2\end{array}\right),
\end{equation*}
\begin{equation*}
c_9=\frac{1}{3}d_1+\frac{2}{3}d_2+\frac{1}{3}c_4+\frac{2}{3}e_4=\left(\begin{array}{c}-2\\-1\\-1\\2\end{array}\right),
\end{equation*}
and we subdivide the cones $c_4d_2d_3e_4, d_1c_4d_3e_4, d_1d_2c_4e_4$ as follows:
\begin{eqnarray*}
c_4d_2d_3e_4 & {\rm to} & c_5d_2d_3e_4, c_4c_5d_3e_4, c_4d_2c_5e_4, c_4d_2d_3c_5; \\
d_1c_4d_3e_4 & {\rm to} & c_7c_4d_3e_4, d_1c_7d_3e_4, d_1c_4c_7e_4, d_1c_4d_3c_7; \\
d_1d_2c_4e_4 & {\rm to} & c_9d_2c_4e_4, d_1c_9c_4e_4, d_1d_2c_9e_4, d_1d_2c_4c_9.
\end{eqnarray*}
The determinants of the cones $c_4d_2c_5e_4, c_4d_2d_3c_5, c_7c_4d_3e_4, d_1c_4d_3c_7$, $d_1c_9c_4e_4$, and $d_1d_2c_4c_9$
are $-2$.
So we further introduce points
\begin{equation*}
c_6=\frac{1}{2}c_4+\frac{1}{2}d_2+\frac{1}{2}c_5=\left(\begin{array}{c}-2\\-2\\-1\\2\end{array}\right),
\end{equation*}
\begin{equation*}
c_8=\frac{1}{2}c_7+\frac{1}{2}c_4+\frac{1}{2}d_3=\left(\begin{array}{c}-1\\-2\\-2\\2\end{array}\right),
\end{equation*}
\begin{equation*}
c_{10}=\frac{1}{2}d_1+\frac{1}{2}c_9+\frac{1}{2}c_4=\left(\begin{array}{c}-2\\-1\\-2\\2\end{array}\right),
\end{equation*}
and we subdivide the cones as follows (see Figure \ref{14}):
\begin{eqnarray*}
c_4d_2c_5e_4 & {\rm to} & c_6d_2c_5e_4, c_4c_6c_5e_4, c_4d_2c_6e_4; \\
c_4d_2d_3c_5 & {\rm to} & c_6d_2d_3c_5, c_4c_6d_3c_5, c_4d_2d_3c_6; \\
c_7c_4d_3e_4 & {\rm to} & c_8c_4d_3e_4, c_7c_8d_3e_4, c_7c_4c_8e_4; \\
d_1c_4d_3c_7 & {\rm to} & d_1c_8d_3c_7, d_1c_4c_8c_7, d_1c_4d_3c_8; \\
d_1c_9c_4e_4 & {\rm to} & c_{10}c_9c_4e_4, d_1c_{10}c_4e_4, d_1c_9c_{10}e_4; \\
d_1d_2c_4c_9 & {\rm to} & c_{10}d_2c_4c_9, d_1d_2c_{10}c_9, d_1d_2c_4c_{10}.
\end{eqnarray*}
All the determinants of the resulting cones are $1$.

\begin{figure}[htbp]
\begin{center}
\includegraphics{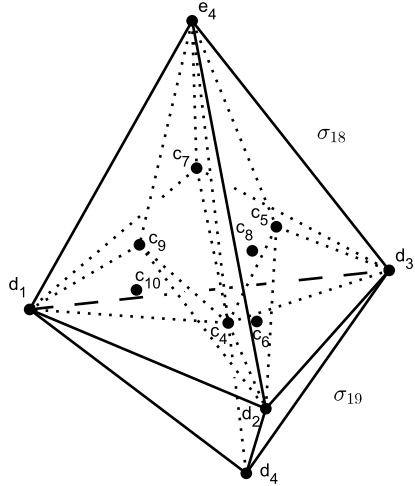}
\end{center}
\caption{subdivision of $\sigma_{18}$ and $\sigma_{19}$.}
\label{14}
\end{figure}

{\it Subdivision of $\sigma_{19}$.}
We subdivide $\sigma_{19}=d_1d_2d_3d_4$ to $c_4d_2d_3d_4, d_1c_4d_3d_4, d_1d_2c_4d_4$.
All the determinants of the resulting cones are $1$.

Thus we replaced $\sigma_{13}, \sigma_{15}, \sigma_{17}, \sigma_{18}$, and $\sigma_{19}$ by the cones in Table \ref{subdivided}.

\begin{table}[htbp]
\begin{center}
\begin{tabular}{|c|c||c|c||c|c||c|c|}
\hline
cone & sign & cone & sign & cone & sign & cone & sign \\
\hline
$c_1d_3e_3d_4$ & $+$ & $c_3d_2e_3c_2$ & $+$ & $c_8c_4d_3e_4$ & $-$ & $d_1c_9c_{10}e_4$ & $-$ \\
$e_1c_1e_3d_4$ & $+$ & $d_3c_3e_3c_2$ & $+$ & $c_7c_8d_3e_4$ & $-$ & $d_1d_2c_9e_4$ & $-$ \\
$e_1d_3e_3c_1$ & $+$ & $d_3d_2e_3c_3$ & $+$ & $c_7c_4c_8e_4$ & $-$ & $c_{10}d_2c_4c_9$ & $-$ \\
$c_1d_1d_3d_4$ & $+$ & $c_5d_2d_3e_4$ & $-$ & $d_1c_7d_3e_4$ & $-$ & $d_1d_2c_{10}c_9$ & $-$ \\
$e_1d_1c_1d_4$ & $+$ & $c_4c_5d_3e_4$ & $-$ & $d_1c_4c_7e_4$ & $-$ & $d_1d_2c_4c_{10}$ & $-$ \\
$e_1d_1d_3c_1$ & $+$ & $c_6d_2c_5e_4$ & $-$ & $d_1c_8d_3c_7$ & $-$ & $c_4d_2d_3d_4$ & $+$ \\
$c_2d_2e_3d_4$ & $+$ & $c_4c_6c_5e_4$ & $-$ & $d_1c_4c_8c_7$ & $-$ & $d_1c_4d_3d_4$ & $+$ \\
$d_3c_2e_3d_4$ & $+$ & $c_4d_2c_6e_4$ & $-$ & $d_1c_4d_3c_8$ & $-$ & $d_1d_2c_4d_4$ & $+$ \\
$c_3d_2c_2d_4$ & $+$ & $c_6d_2d_3c_5$ & $-$ & $c_9d_2c_4e_4$ & $-$ & & \\
$d_3c_3c_2d_4$ & $+$ & $c_4c_6d_3c_5$ & $-$ & $c_{10}c_9c_4e_4$ & $-$ & & \\
$d_3d_2c_3d_4$ & $+$ & $c_4d_2d_3c_6$ & $-$ & $d_1c_{10}c_4e_4$ & $-$ & & \\
\hline
\end{tabular}
\caption{subdivided cones.}
\label{subdivided}
\end{center}
\end{table}

Now we have a refinement $\Delta'$ of $\Delta$ which has $18$ edges and $55$ $4$-dimensional cones.
The determinant of each $4$-dimensional cone of $\Delta'$ is $1$ or $-1$.
So $\Delta'$ is a non-singular complete fan and the corresponding toric variety $X(\Delta')$ is a toric manifold.

\begin{prop}\label{non-polytopality}
The underlying simplicial complex $K_{\Delta'}$ is not polytopal.
So the orbit space $X(\Delta')/(S^1)^n$ of the corresponding toric manifold $X(\Delta')$
is not homeomorphic to any simple polytope as a manifold with corners,
that is, $X(\Delta')$ is not a quasitoric manifold.
\end{prop}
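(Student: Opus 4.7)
The plan is to argue by contradiction: assume that $K_{\Delta'}$ is combinatorially equivalent to the boundary complex of some simplicial convex $4$-polytope, and deduce a polytopal realization of the Barnette sphere $K_\Delta$, contradicting \cite{B}.

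First, observe that $K_{\Delta'}$ is built from $K_\Delta$ by a sequence of ten stellar subdivisions. Tracing Section~3, the vertices $c_2, c_5, c_7, c_9$ are introduced in the relative interior of $3$-simplices (stackings of a tetrahedron), whereas $c_1, c_3, c_4, c_6, c_8, c_{10}$ are introduced in the relative interior of $2$-simplices of the current intermediate complex. In particular, after all ten subdivisions are undone, one recovers exactly the $19$ facets $\sigma_1,\dots,\sigma_{19}$ of $\Delta$, i.e.\ the Barnette sphere.

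Given a simplicial $4$-polytope $P$ whose boundary complex is $K_{\Delta'}$, I would remove the $c_i$'s one at a time, in the reverse order of their introduction, so that at each step the link of the vertex about to be removed is either the boundary of a tetrahedron (in the stacking case) or the boundary of a triangular bipyramid (in the $2$-simplex case). For a stacking vertex, removal is automatic: the vertex lies beyond exactly one facet of the convex hull of the remaining vertices, so deletion returns a convex polytope with one fewer subdivision. For a vertex introduced on a $2$-simplex, one has to move it continuously into the plane of the original triangle while keeping the polytope convex at every step; since its link at that stage is a triangular bipyramid boundary, the two apex vertices lie on opposite sides of this plane, and the required motion is feasible. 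I would carry this out by adapting the technique of \cite[Chapter~III, Theorem~5.3]{E}.

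After the ten successive removals the resulting convex polytope realizes $K_\Delta$, contradicting the non-polytopality of the Barnette sphere. The main obstacle is the $2$-simplex removal step: stellar subdivisions on $2$-faces are not freely reversible in the polytopal category, and the delicate point is to show that the specific numerical choices of the $c_i$ made in Section~3 make each convex motion feasible. The second assertion of the proposition---that $X(\Delta')$ is not a quasitoric manifold---then follows at once from the first via the criterion from \cite{BP} recalled in the introduction, since the restricted torus action on a toric manifold is always locally standard.
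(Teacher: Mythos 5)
Your strategy rests on the claim that a polytopal realization of $K_{\Delta'}$ can be converted, by undoing the ten stellar subdivisions one at a time, into a polytopal realization of the Barnette sphere $K_\Delta$. This is exactly the step that does not work, and it is not a removable technicality: inverse stellar subdivision does \emph{not} preserve polytopality in general. Indeed, by results of Adiprasito and Izmestiev, suitable iterated stellar (derived) subdivisions of \emph{any} PL sphere --- including the Barnette sphere itself --- are polytopal; if your reversal lemma held, one could undo those subdivisions and conclude that the Barnette sphere is polytopal, which is false. So the implication ``stellar subdivision polytopal $\Rightarrow$ original polytopal'' must fail for some sequences of subdivisions, and you give no argument specific to the six triangle subdivisions at $c_1,c_3,c_4,c_6,c_8,c_{10}$ that would rule out failure here. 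Your proposed mechanism --- continuously moving the new vertex into the plane of the original triangle while keeping convexity and the combinatorial type --- is precisely the assertion in question, and the appeal to ``the specific numerical choices of the $c_i$ made in Section~3'' cannot help: those coordinates describe rays of the fan, whereas the hypothetical simplicial $4$-polytope realizing $K_{\Delta'}$ has completely unknown vertex positions, so there is no geometric data to exploit. (The stacking reversals at $c_2,c_5,c_7,c_9$ are indeed harmless, and your bookkeeping of which $c_i$ subdivide a tetrahedron versus a triangle is correct; the gap is entirely in the six triangle cases.)

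The paper avoids this by not passing through $K_\Delta$ at all: it locates an obstruction to polytopality that survives the subdivision inside $K_{\Delta'}$ itself. The edge $e_1d_3$ is still an edge of $K_{\Delta'}$, and its link is the hexagon $d_2e_3,\, e_3c_1,\, c_1d_1,\, d_1e_2,\, e_2e_4,\, e_4d_2$; examining a Schlegel diagram onto the facet $A_{11}=d_1e_2e_3d_2$ and projecting to a plane, the triangles $e_2e_4d_2$ and $e_3c_1d_1$ are forced to be linked, so the projected link of $e_1d_3$ self-intersects --- impossible for the boundary complex of a convex polytope. If you want to salvage your approach, you would have to prove the reversal lemma for each of the six triangle subdivisions individually, which is at least as hard as the direct argument; otherwise the proof is incomplete. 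Your final paragraph deducing the quasitoric statement from non-polytopality via \cite{BP} is fine.
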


\begin{proof}
Our proof is similar to the proof of \cite[5.3 Theorem, Chapter III]{E}.
Suppose that $K_{\Delta'}$ is polytopal.
We denote the $3$-simplex corresponding to $\sigma_i$ by $A_i$
and denote a $3$-simplex by its arranged vertices.
Take a Schlegel diagram of $K_{\Delta'}$ to the $3$-simplex $A_{11}=d_1e_2e_3d_2$.
The two $3$-simplices $A_2$ and $A_6$ intersect along the common face $d_1e_3e_4$,
and the point $c_1$ is not in $A_2 \cup A_6$.
The star ${\rm st}(e_1d_3)$ of the edge $e_1d_3$ is the union of the six $3$-simplices
$A_9=e_1d_2e_3d_3, e_1d_3e_3c_1, e_1d_1d_3c_1, A_{10}=e_1e_2d_3d_1, A_2=d_1e_2e_3e_4, A_7=e_1d_2d_3e_4$.
Since the link ${\rm lk}(e_1d_3)$ of $e_1d_3$ consists of the simplices in ${\rm st}(e_1d_3)$ which do not intersect $e_1d_3$,
it consists of six edges $d_2e_3, e_3c_1, c_1d_1, d_1e_2, e_2e_4, e_4d_2$ (see Figure \ref{fig:one}).

\begin{figure}[htbp]
\begin{center}
\includegraphics{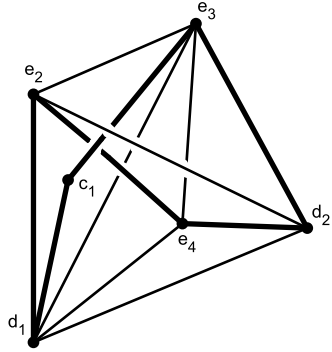}
\end{center}
\caption{Schlegel diagram of $K_{\Delta'}$ to the simplex $A_{11}$.}
\label{fig:one}
\end{figure}

Consider the projection of the diagram Figure \ref{fig:one} onto a plane.
Since $c_1$ is not in $A_2=d_1e_2e_3e_4$ nor in $A_6=d_1d_2e_3e_4$,
$c_1$ must be a point such that the two triangles $e_2e_4d_2$ and $e_3c_1d_1$ are linked as links of a chain.
So their images on the plane intersect.
Thus the image of the diagram in Figure \ref{fig:one} onto a plane falls into seven types in Figure \ref{fig:two} essentially.
The former three diagrams in Figure \ref{fig:two} are the case where each point of $d_1, e_2, e_3, d_2$ is a boundary point of the image of $A_{11}$,
and the latter four diagrams in Figure \ref{fig:two} are the case where one point of $d_1, e_2, e_3, d_2$ is an interior point of the image of $A_{11}$.
The positions of the points $e_4$ and $c_1$ may differ from the graphs,
but in any case, the image of ${\rm lk}(e_1d_3)$ has a self-intersection.
However, if the simplicial complex is polytopal,
the link of any edge can be projected onto a plane perpendicular to the affine hull of the edge without self-intersection.
This is a contradiction.
Thus we proved the proposition.
\begin{figure}[htbp]
\begin{center}
\includegraphics{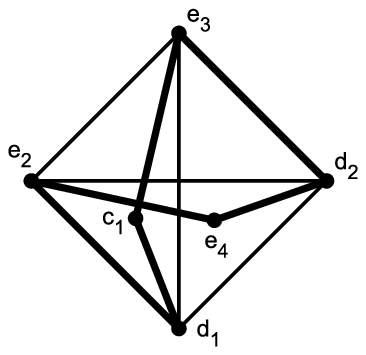}
\includegraphics{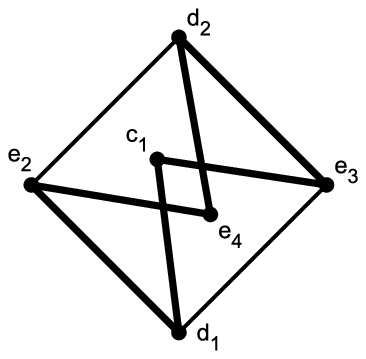}
\includegraphics{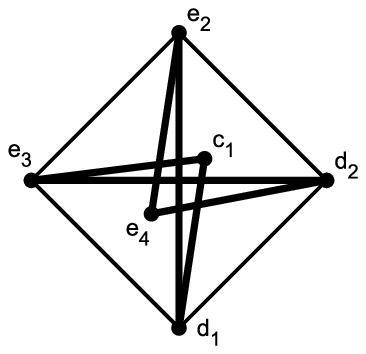}
\includegraphics{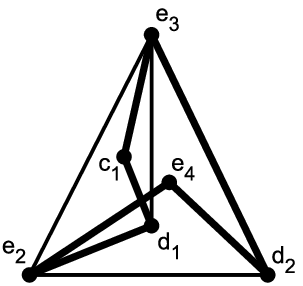}
\includegraphics{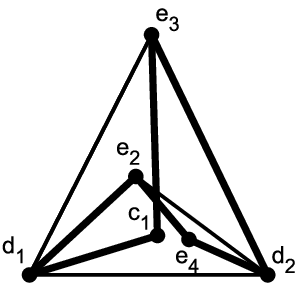}
\includegraphics{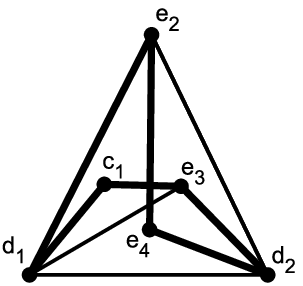}
\includegraphics{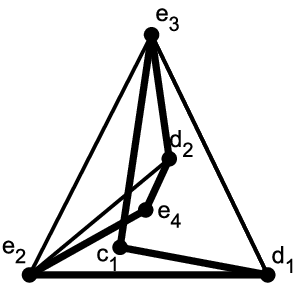}
\end{center}
\caption{images of the Schlegel diagram (up to mirror images).}
\label{fig:two}
\end{figure}
\end{proof}

The main theorem is deduced by using the fan $\Delta'$.

\begin{proof}[Proof of Theorem \ref{main theorem}]
Subdividing the cone $\sigma_{16}$ in $\Delta'$ by its interior point $d_1+e_2+d_2+d_4$,
we have another toric manifold of complex dimension $4$
whose orbit space by the compact torus is not homeomorphic to any simple polytope as a manifold with corners.
Successive subdivisions produce infinitely many such toric manifolds.

If a simplicial sphere is non-polytopal, then its suspension is also non-polytopal.
Because if its suspension were polytopal, the link of a new vertex would also be polytopal,
which contradicts that the link is the original non-polytopal simplicial sphere.
Thus for any $n \geq 4$,
we have infinitely many toric manifolds of complex dimension $n$
whose orbit spaces by the compact torus are not homeomorphic to any simple polytope as manifolds with corners.
This completes the proof of Theorem \ref{main theorem}.
\end{proof}

\section{Acknowledgement}

The author wishes to thank Professor Mikiya Masuda
for his valuable advice about mathematics and continuing support.
Professor Megumi Harada gave me valuable advice about writing.

\end{document}